\newcommand\Z{{\mathbb Z}}
\newcommand\Zpos{\Z_{\ge1}}
\newcommand\Znn{\Z_{\ge0}}
\newcommand\Q{{\mathbb Q}}
\newcommand\C{{\mathbb C}}
\newcommand\F{{\mathbb F}}
\newcommand\Fp{\F_{\!p}}
\newcommand\lset{\{\,}
\newcommand\rset{\,\}}
\newcommand\lra{\longrightarrow}
\newcommand\inv{^{-1}}
\newcommand\tr{\operatorname{tr}}
\newcommand\ie{{\em i.e.}}
\newcommand\ip[2]{\langle #1,#2\rangle} 
\newcommand\siX[1]{{\mathcal X}_{#1}} 
\newcommand\XtwoN{\siX2(N)}
\newcommand\SLgp{\operatorname{SL}}
\newcommand\SL[2]{\SLgp_{#1}(#2)}
\newcommand\SLtwoZ{\SL2\Z}
\newcommand\cmat[4]{\left[\begin{array}{cc}
{#1}&{#2}\\{#3}&{#4}\end{array}\right]}
\newcommand\smallmat[4]{\left[\begin{smallmatrix}
{#1}&{#2}\\{#3}&{#4}\end{smallmatrix}\right]}
\newcommand\paramodulargroup[1]{{\rm K}(#1)}
\newcommand\KN{\paramodulargroup N}
\newcommand\KNM{\paramodulargroup{NM}}
\newcommand\pvar{{\cmat\tau zz\omega}} 
\newcommand\fcJ[2]{c(#1;#2)} 
\newcommand\wtvar[2]{[#1]_{#2}}
\newcommand\wtk[1]{\wtvar{#1}k}
\newcommand\fc[2]{a(#1;#2)} 
\newcommand\e{{\rm e}} 
\newcommand\CFsnoweight{{\mathcal S}} 
\newcommand\CFswtgp[2]{\CFsnoweight_{#1}(#2)}
\newcommand\anyS{\CFsnoweight}
\newcommand\SkKN{\CFswtgp k\KN}
\newcommand\StwoKN{\CFswtgp2\KN}
\newcommand\StwoKlevel[1]{\CFswtgp2{\paramodulargroup{#1}}}
\newcommand\StwoKtwoN{\StwoKlevel{2N}}
\newcommand\SwtKlevel[2]{\CFswtgp{#1}{\paramodulargroup{#2}}}
\newcommand\SfourKN{\CFswtgp4\KN}
\newcommand\Jcusp[2]{{\rm J}_{#1,#2}^{\rm cusp}}
\newcommand\Jkmcusp{\Jcusp km}
\newcommand\JkjNcusp{\Jcusp k{jN}}
\newcommand\JtwoNcusp{\Jcusp2N}
\newcommand\Jwh[2]{{\rm J}_{#1,#2}^{\rm w.h.}}
\newcommand\JkNwh{\Jwh kN}
\newcommand\JzeroNwh{\Jwh0N}
\newcommand\Grit{\opratorname Grit}
\newcommand\GritJkNcusp[2]{\Grit(\Jcusp{#1}{#2})}
\newcommand\GritJtwoNcusp{\GritJkNcusp2N}
\def\Grit{\operatorname{Grit}}
\def\JRMJ{{\mathcal J}}
\def\JRMJdv{\JRMJ_d^v}
\def\JRMJdpv{\JRMJ_{d,p}^v}
\def\detmax{{\det_{\max}}}
\def\TD{\operatorname{TD}}
\newcommand\mymod{\text{ mod }}
\newcommand\BL{{\rm Borch}}
\theoremstyle{plain}
\newtheorem{theorem}{Theorem}[section]
\newtheorem{lemma}[theorem]{Lemma}
\begin{document}
\title[Paramodular eigenform constructions]
{Nonlift weight two paramodular eigenform constructions}

\author[C.~Poor]{Cris Poor}
\address{Department of Mathematics, Fordham University, Bronx, NY 10458 USA}
\email{poor@fordham.edu}

\author[J.~Shurman]{Jerry Shurman}
\address{Department of Mathematics, Reed College, Portland, OR 97202 USA}
\email{jerry@reed.edu}

\author[D.~Yuen]{David S.~Yuen}
\address{Department of Mathematics, University of Hawaii, Honolulu, HI 96822 USA}
\email{yuen@math.hawaii.edu}

\subjclass[2010]{Primary: 11F46; secondary: 11F55,11F30,11F50}
\date{\today}

\begin{abstract}
We complete the construction of the nonlift weight two cusp
para\-modular Hecke eigenforms for prime levels $N<600$, which arise in
conformance with the paramodular conjecture of Brumer and Kramer.
\end{abstract}

\keywords{Paramodular cusp form, Borcherds product}

\maketitle


\section{Introduction\label{sectionIntr}}

We use Borcherds products to construct all the paramodular nonlift
newforms that are suggested by the work of \cite{py15}.
That article and this one provide evidence for the paramodular
conjecture of A.~Brumer and K.~Kramer, initially stated in
\cite{brkr14}.  The paramodular conjecture is now modified in
section~8 of \cite{brkr18}, after F.~Calegari pointed out an oversight
in the conjecture's earlier statement.
Define an abelian variety $B/K$ to be a {\em QM abelian variety\/} or
{\em QM by~$D$ abelian variety\/}
if ${\rm End}_KB$ is an order in the non-split quaternion algebra $D/\Q$.
Define a cuspidal, nonlift Siegel paramodular newform~$f$ of degree~$2$,
weight~$2$ and level~$N$ with rational Hecke eigenvalues to be a
{\em suitable\/} paramodular form of level~$N$.
The paramodular cusp form space is denoted $\StwoKN$---the
subscript~$2$ indicates the weight, $\KN$ denotes the paramodular
group of degree~$2$ and level~$N$, and the degree is omitted from the
notation because all paramodular forms in this article have
degree~$2$.
Newforms on~$\KN$ are by definition Hecke eigenforms orthogonal to the
images of level-raising operators from paramodular forms of lower
levels \cite{robertsschmidt06,robertsschmidt07}.
The lift space in $\StwoKN$ is $\GritJtwoNcusp$, the Gritsenko
(additive) lift of the Jacobi cusp form space of weight~$2$ and
index~$N$.
The paramodular conjecture is:
\begin{quote}
{\em Let $\mathcal A_N$ be the set of isogeny classes of abelian
  surfaces $A/\Q$ of conductor~$N$ with ${\rm End}_\Q A=\Z$, let
  $\mathcal B_N$ be the set of isogeny classes of QM abelian fourfolds
  $B/\Q$ of conductor~$N^2$, and let $\mathcal P_N$ be the set of
  suitable paramodular forms of level~$N$, up to nonzero
  scaling. There is a bijection
  $\mathcal A_N\cup\mathcal B_N\longleftrightarrow\mathcal P_N$
  such that
  $$
  L(C,s,\text{\rm Hasse--Weil})=\begin{cases}
  L(f,s,{\rm spin})  &\text{if $C\in\mathcal A_N$},\\
  L(f,s,{\rm spin})^2&\text{if $C\in\mathcal B_N$}.
  \end{cases}
  $$
}
\end{quote}

In \cite{py15}, the first and third authors of this article studied
$\StwoKN$ for prime levels $N<600$, proving by algorithm that
$\StwoKN=\GritJtwoNcusp$ for all such~$N$ other than the
exceptional cases $N=277,349,353,389,461,523,587$, precisely the
primes~$N<600$ for which relevant abelian surfaces exist \cite{brkr14}.
Also, \cite{py15} shows that in the exceptional cases there is at most
one nonlift dimension, lying in the Fricke plus space, except that for
$N=587$ there is at most one Fricke plus space nonlift dimension and
at most one Fricke minus space nonlift dimension.  We refer to these
last two cases as $N=587^\pm$.  At each relevant level, granting that
the nonlift exists, the relevant newform~$f=f_N$ is known (see the
website \cite{yuen15a}), as are Euler factors of $L(f,s,{\rm spin})$
for small primes \cite{py15}.
Further, \cite{py15} shows that $\StwoKlevel{277}$ does
contains a nonlift dimension $\C f_{277}$, by constructing it.
In \cite{gpy16}, V.~Gritsenko and the first and third authors of this
article constructed a nonlift in $\StwoKlevel{587}^-$, using a
Borcherds product.
A.~Brumer and A.~Pacetti and G.~Tornar\'ia and J.~Voight and the first
and third authors of this article have shown the equality of
$L(f_N,s,{\rm spin})$ and $L(A_N,s,\text{Hasse--Weil})$,
for $N=277,353,587^-$,
citing this article for the existence of the nonlift for~$N=353$
\cite{bpptvy18,yuen15}.
We call $N=277,349,353,389,461,523,587^\pm$ the {\em outstanding levels\/},
even though some of them are already addressed.
This article describes nonlift constructions for all of them,
completing \cite{py15}.
All but two outstanding levels have nonlift Borcherds products;
the more interesting cases are the two exceptions, $N=461,587^+$.
Thus our main result, also including the squarefree composite levels
through~$300$, studied in \cite{MR3713095}, is

\begin{theorem}
The following dimensions are established.
$$
\begin{array}{|c||ccc|}
\hline
N&\dim\JtwoNcusp&\dim\StwoKN^+&\dim\StwoKN^-\\
\hline\hline
249&5&6&0\\
277&10&11&0\\
295&6&7&0\\
349&11&12&0\\
353&11&12&0\\
389&11&12&0\\
461&12&13&0\\
523&17&18&0\\
587&18&19&1\\
\hline
\end{array}
$$
\end{theorem}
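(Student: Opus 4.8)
The plan is to prove the three columns of the table by establishing matching lower and upper bounds level by level, since in each row the only subtle quantity is the number of nonlift dimensions. The first column is a computation: the dimension $\dim\JtwoNcusp$ of the Jacobi cusp forms of weight~$2$ and index~$N$ is given by a standard dimension formula (of Eichler--Zagier type, equivalently a trace count), which at the listed indices $N=249,277,295,349,353,389,461,523,587$ returns the integers $5,10,6,11,11,11,12,17,18$. Because the Gritsenko additive lift is injective on cusp forms and carries $\JtwoNcusp$ into $\StwoKN$, the lift subspace $\GritJtwoNcusp$ has dimension exactly $\dim\JtwoNcusp$; and since the weight~$2$ is even, the lifts all lie in the Fricke plus space, as already used implicitly in the introduction. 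This gives the lower bounds $\dim\StwoKN^+\ge\dim\JtwoNcusp$ and $\dim\StwoKN^-\ge0$ and pins down the entire first column.

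The upper bounds come from the prior algorithmic work. For the prime levels I would invoke \cite{py15}, which proves $\StwoKN=\GritJtwoNcusp$ for every prime $N<600$ outside the outstanding set, and proves that at each outstanding level there is at most one nonlift dimension, lying in the Fricke plus space, the sole exception being $N=587$, where there is at most one plus nonlift \emph{and} at most one minus nonlift. For the squarefree composite levels $N=249,295$ I would invoke \cite{MR3713095}. Combining these inputs with the lift contribution, the upper bounds $\dim\StwoKN^+\le\dim\JtwoNcusp+1$ and $\dim\StwoKN^-\le0$ hold at every level except $587$, where instead $\dim\StwoKN^+\le\dim\JtwoNcusp+1$ and $\dim\StwoKN^-\le1$. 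Thus the upper bounds already agree with the table, and the remaining task is purely to construct nonlifts realizing the lower bounds.

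The heart of the argument is therefore the explicit construction, at each outstanding level, of a cusp form that is provably a nonlift with the correct Fricke sign. For every level except $N=461$ and the plus space at $N=587$, I would exhibit a Borcherds product: such a product is automatically a holomorphic paramodular form, its divisor exhibits it as a cusp form, and one verifies it is a genuine nonlift---not in the image of $\Grit$---by comparing a handful of Fourier coefficients against those forced on any element of the known lift subspace $\GritJtwoNcusp$. Computing its Fricke eigenvalue then places it in $\StwoKN^+$; for the single minus dimension at $N=587$ the requisite Borcherds product has already been produced in \cite{gpy16}. Each such construction raises the relevant nonlift lower bound by one, meeting the upper bound and fixing that entry of the table.

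The main obstacle is the pair $N=461$ and $N=587^+$, which, as noted in the introduction, admit no nonlift Borcherds product, so a single theta-block argument cannot deliver the form. Here I would instead build the nonlift as a rational or multiplicative combination---for instance a quotient of Borcherds products, or a product of a Borcherds form with a Gritsenko lift divided by another Borcherds form---chosen so that the divisor bookkeeping cancels exactly, yielding a holomorphic weight~$2$ form that vanishes along the cusps. The delicate points are confirming holomorphy of the combination (checking that no residual poles survive along the paramodular divisors) and confirming that the resulting form is not a lift and carries Fricke eigenvalue~$+1$; once these two verifications are in hand, the nonlift lower bounds at $461$ and at $587^+$ rise to meet the upper bounds, completing the table.
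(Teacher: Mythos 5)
Your framework---lower bounds from the injectivity of the Gritsenko lift into the Fricke plus space, upper bounds from \cite{py15} for prime levels and \cite{MR3713095} for $N=249,295$, and Borcherds-product constructions (with \cite{gpy16} for $587^-$) at every outstanding level except $N=461$ and $587^+$---matches the paper. The genuine gap is precisely at those two hardest cases, which are the main point of the article. Your primary suggestion, a quotient of Borcherds products at level~$N$, cannot work even in principle: the Borcherds lift is multiplicative, so a holomorphic quotient $\BL(\psi_1)/\BL(\psi_2)$ is again a Borcherds product, namely $\BL(\psi_1-\psi_2)$, and the result of \cite{psyallbp} that the paper invokes is that \emph{no} nonlift Borcherds product exists in $\StwoKN^+$ for $N=461,587$. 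Your fallback, a Gritsenko lift times a meromorphic weight-$0$ Borcherds quotient with poles cancelled by zeros of the lift, is not excluded by that obstruction, but you give no candidate divisor data at either level, and the two ``delicate points'' you defer (holomorphy and nonliftness) are exactly the content that a proof must supply; as written, the table entries for $461$ and $587^+$ rest on an unexecuted hope.

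The paper's actual route is different in kind: it leaves level~$N$ entirely. One constructs Borcherds products in $\StwoKlevel{2N}^-$ (one at level $922$, three at $1174$, again by the methods of \cite{psyallbp}), polarizes to obtain Atkin--Lehner eigenforms $f_{2N}^{-,+}\in\StwoKtwoN^{-,+}$ with $\mu_2$-eigenvalue $-1$ and $\mu_N$-eigenvalue $+1$, and applies the trace-down operator $\TD$ to land in $\StwoKN^+$, since tracing down preserves the $\mu_N$-eigenvalue. Because too few Fourier coefficients of $f_{2N}^{-,+}$ are directly computable, the verification is done modulo $p=12347$: Jacobi Restriction at level $2N$ with $d=5$ produces a one-dimensional superspace $\JRMJ_{5,p}^{-,+}$ of $\StwoKlevel{2N}^{-,+}[5]_p$, so a single nonzero coefficient pins down $(f_{2N}^{-,+})[5]_p$ (``prolonging''), Atkin--Lehner symmetry $\fc{\alpha t\alpha'}{f}=\epsilon\,\fc tf$ yields further coefficients (``infilling''), and the resulting $255$ (resp.\ $271$) coefficients of $\TD((f_{2N}^{-,+})_p)=(\TD f_{2N}^{-,+})_p$ suffice to prove linear independence from the Gritsenko lifts modulo~$p$, hence nonliftness in characteristic~$0$; the reduction arguments are justified by the paper's saturated-lattice lemma. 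If you want to salvage your proposal, you should replace your fourth paragraph with this trace-down-from-$2N$ mechanism, or else exhibit explicitly the meromorphic-quotient data your alternative requires.
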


More details of the nonlift computations are given at the
website \cite{yuen18}.
In consequence of this theorem, the provisional nonlift eigenform
formulas in \cite{py15} are now established.  Expressions for the
nonlift eigenforms as linear combinations of our constructed nonlifts
and Gritsenko lifts will appear in a later expanded version of this
article.

\section{Background\label{sectionBG}}

The background section of \cite{MR3713095} introduces terminology and
notation for paramodular forms and for Fricke and Atkin--Lehner involutions.
Here we add some specifics that bear on this article.
Let $\SkKN$ denote the space of weight~$k$, level~$N$ paramodular cusp
forms.  For $k\ge3$ and squarefree~$N$, a formula for $\dim\SkKN$ is
given in \cite{ik17}, but no such formula is known for~$k=2$.  Every
paramodular cusp form of weight~$k$ and level~$N$ has a Fourier expansion
$$
f(\Omega)=\sum_{t\in\XtwoN}\fc tf\,\e(\ip t\Omega)
$$
where
$\XtwoN=\lset\smallmat n{r/2}{r/2}{mN}:n,m\in\Znn,r\in\Z,4nmN-r^2>0\rset$
and $\e(z)=e^{2\pi iz}$ and $\ip t\Omega=\tr(t\Omega)$.
The Fourier--Jacobi expansion of such a paramodular cusp form is
$$
f(\Omega)=\sum_{m\ge1}\phi_m(f)(\tau,z)\xi^{mN},
\quad \Omega=\pvar,\ \xi=\e(\omega)
$$
with Jacobi coefficients
$$
\phi_m(f)(\tau,z)
=\sum_{t=\smallmat n{r/2}{r/2}{mN}\in\XtwoN}\fc tfq^n\zeta^r,
\quad q=\e(\tau),\ \zeta=\e(z).
$$
Here the coefficient $\fc tf$ is also the coefficient $\fcJ{n,r}{\phi_m}$.
That is,
$$
\phi_m(f)(\tau,z)
=\sum_{n,r:4nmN-r^2>0}\fcJ{n,r}{\phi_m}q^n\zeta^r.
$$
Each Jacobi coefficient $\phi_m(f)$ lies in the space
$\Jcusp k{mN}$ of weight~$k$, index~$mN$ Jacobi cusp forms,
whose dimension is known \cite{ez85,sz89}.  For the theory of Jacobi forms,
see \cite{ez85,gn98,sz89}.

Let $\anyS$ denote any Fricke eigenspace or Atkin--Lehner eigenspace
of $\SkKN$.  To describe our computational method for studying~$\anyS$
in weight $k=2$, introduce for any positive integer~$d$ the notation
\begin{alignat*}2
\anyS(d)&=&&\lset\text{$\anyS$-elements with vanishing first $d-1$
  Jacobi coefficients}\rset,\\
\anyS[d]&=&&\lset\text{$\anyS$-elements truncated to the first $d$
  Jacobi coefficients}\rset.
\end{alignat*}
Thus there is an exact sequence
$$
0\lra\anyS(d+1)\lra\anyS\lra\anyS[d]\lra0.
$$
Our prototype methodology to study~$\anyS$ in weight~$2$ is as follows.
\begin{enumerate}
\item Show for some nonnegative integer~$d$ that $\anyS(d+1)=0$, i.e.,
  every element of~$\anyS$ is determined by its first~$d$ Jacobi
  coefficients.
\item Find an efficient superspace~$\JRMJ$ of $\anyS[d]$.  We are
  confident that $\JRMJ$ is exactly $\anyS[d]$, so we believe that the
  bound $\dim\anyS\le\dim\JRMJ$ is an equality, and we can identify
  elements of~$\anyS$ uniquely as linear combinations of our basis
  of~$\JRMJ$.
\item Show that $\dim\anyS=\dim\JRMJ$, and span~$\anyS$ in the
  process, by constructing that many independent elements in~$\anyS$.
\end{enumerate}
We use variations of these ideas when convenient, such as identifying
an element $f[d]$ in~$\JRMJ$ even when this determines~$f$ only up to 
$\anyS(d+1)$, and when $\JRMJ$ may well contain $\anyS[d]$ properly.

The methods described in the previous paragraph are facilitated by
working in some prime characteristic~$p$, rather than in
characteristic~$0$; as will be explained below, this doesn't lose any
dimensions.
We describe the transition from characteristic~$0$ to characteristic~$p$.
Continue to let $\anyS$ denote any Fricke eigenspace or Atkin--Lehner
eigenspace of $\SkKN$, and let $\Jkmcusp$ denote the space of
weight~$k$, index~$m$ Jacobi cusp forms for any~$m$.
Using the Jacobi expansions of paramodular cusp forms, we view two
maps as containments for simplicity,
$$
\anyS\subset\bigoplus_{j=1}^\infty\JkjNcusp\subset\C^\infty.
$$
The latter containment entails some chosen ordering  of the index sets
for Fourier expansions of Jacobi forms.
For any prime~$p$, let a subscript~$p$ denote the map from subsets
of~$\C^\infty$ to subsets of~$\Fp^\infty$ that reduces integral
elements modulo~$p$,
$$
V_p = (V\cap\Z^\infty)\mymod p,\quad V\subset\C^\infty.
$$
(So $(av+\tilde v)_p=a_pv_p+\tilde v_p$ for $a\in\Z$ and $v,\tilde
v\in\Z^\infty$, where $a_p$ is the image of~$a$ in~$\Fp$.)
Thus
$$
\anyS_p\subset\bigoplus_{j=1}^\infty(\JkjNcusp)_p\subset\Fp^\infty.
$$
The spaces  $\anyS$ and $\Jkmcusp$ have bases in~$\Z^\infty$ by
results of \cite{shim75} and \cite{ez85}, respectively.
Therefore $\dim_{\Fp}\anyS_p=\dim_\C\anyS$
and $\dim_{\Fp}(\Jkmcusp)_p=\dim_\C\Jkmcusp$.
Extending the notation from above, again for any positive integer~$d$,
\begin{alignat*}2
\anyS_p(d)&=&&\lset\text{elements of $\anyS_p$ with vanishing first
  $d-1$ Jacobi coefficients in~$\Fp$}\rset,\\
\anyS[d]_p&=&&\lset\text{reductions modulo $p$ of the integral
  elements of $\anyS[d]$}\rset,
\end{alignat*}
and we will show below that in consequence of an integral basis of~$\anyS$,
\begin{align*}
\dim_\C\anyS(d)&\le\dim_{\Fp}\anyS_p(d),\\
\dim_\C\anyS[d]&=\dim_{\Fp}\anyS[d]_p.
\end{align*}
Thus, establishing that $\anyS_p(d)=0$ establishes that $\anyS(d)=0$,
and bounding $\dim\anyS[d]_p$ gives the same bound of~$\dim\anyS[d]$.
Here the spaces $\anyS_p(d)$ and $\anyS[d]_p$, which we use, have
different definitions than the spaces $\anyS(d)_p$ and~$\anyS_p[d]$,
which we don't.

For the first part of our three-part method in weight~$2$, the
methods of \cite{MR3713095} let us attempt to show for a given~$d$
that $\anyS(d)=0$ or $\anyS_p(d)=0$, where $\anyS$ is either Fricke
eigenspace of~$\StwoKN$, or even all of~$\StwoKN$.
These methods require us to span enough of the weight~$4$ space
$\SfourKN$, namely a subspace of codimension less than $\dim\Jcusp2N$.
As already noted, we know $\dim\SfourKN$ from \cite{ik17} if $N$ is
squarefree, and we know $\dim\Jcusp2N$ from \cite{ez85,sz89}.

For the second part of our method, the Jacobi Restriction method
\cite{bpy16,ipy13,MR3713095} gives an efficient superspace~$\JRMJ$ of
$\anyS[d]$ or~$\anyS[d]_p$, and thus a good upper bound of
$\dim\anyS[d]$ or $\dim\anyS[d]_p$.
Specifically, let $v$ denote either a Fricke eigenvalue or a vector of
Atkin--Lehner eigenvalues, and let $\anyS^v$ denote the corresponding
Fricke or Atkin--Lehner eigenspace of~$\SkKN$.
Jacobi Restriction runs with $d$ as a parameter, returning a basis of
a finite-dimensional complex vector space $\JRMJdv$ of truncated
formal Fourier--Jacobi expansions such that
$$
\anyS^v[d]\subset\JRMJdv\subset\bigoplus_{j=1}^d\JkjNcusp.
$$
An additional parameter, $\detmax$, bounds the Fourier coefficient
index determinants in the calculation, thus making the method an
algorithm; this parameter must be chosen so that each space $\JkjNcusp$
with~$j\le d$ is determined by the Fourier coefficients whose indices
satisfy the determinant bound.  For simplicity we suppress $\detmax$
from the notation $\JRMJdv$, along with the weight~$k$ and the level~$N$.
Jacobi restriction gives a basis of~$\JRMJdv$, each basis element
represented as a finite collection of data $a(n,r,m)$ where $1\le m\le d$
and $0<nmN-r^2/4\le\detmax$; the truncation of any paramodular cusp form
$f\in\SkKN$ to its first $d$ Jacobi coefficients is determined by its
Fourier coefficients $\fc{\smallmat n{r/2}{r/2}{mN}}f$ for such $n,r,m$.
Jacobi Restriction also can be run modulo~$p$, in which case it
returns a basis of a finite dimensional vector space $\JRMJdpv$
over~$\Fp$ such that
$$
\anyS^v[d]_p\subset\JRMJdpv\subset\bigoplus_{j=1}^d(\JkjNcusp)_p.
$$

The third part of our method is much more situational, involving a
range of context-dependent constructions to create paramodular cusp
forms in the desired space.  For this reason, we attempt no systematic
description here.

We show the relations $\dim_\C\anyS(d)\le\dim_{\Fp}\anyS_p(d)$ 
and $\dim_\C\anyS[d]=\dim_{\Fp}\anyS[d]_p$ for any $d\ge1$ and any
prime~$p$, as stated above.  The relation $\dim_\C\anyS=\dim_{\Fp}\anyS_p$
has already been noted, so the inequality needs to be shown only for~$d\ge2$.
The key is that reducing a saturated lattice modulo a prime doesn't
decrease its dimension.

\begin{lemma}
Let $M\subset\Z^\infty$ be a $\Z$-module of finite rank~$d$.
Suppose that $M$ is saturated, meaning that the general containment
$(\Q\otimes M)\cap\Z^\infty\supset M$ is equality,
$$
(\Q\otimes M)\cap\Z^\infty= M.
$$
Let $p$ be prime, and let $M_p\subset\Fp^\infty$ denote the image
of~$M$ under reduction modulo~$p$, a vector space over~$\Fp$.
Then $\dim M_p=d$.
\end{lemma}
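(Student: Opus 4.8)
The plan is to reduce the problem to a finite number of coordinates and then exploit the saturation hypothesis directly. Since $\Q\otimes M$ is $d$-dimensional and spanned by finitely many elements of~$M$, every element of~$M$ is supported in those finitely many coordinates of~$\Z^\infty$; discarding the unused ones we may assume $M\subset\Z^n$ for some finite~$n$, with the saturation condition now reading $(\Q\otimes M)\cap\Z^n=M$. As a subgroup of the free abelian group $\Z^n$, the module $M$ is free of rank~$d$; fix a $\Z$-basis $v_1,\dots,v_d$. The reduction $M_p$ is spanned by $v_1\mymod p,\dots,v_d\mymod p$, so $\dim M_p\le d$ is automatic, and the entire content of the lemma is the reverse inequality, namely that these $d$ reductions stay linearly independent over~$\Fp$.

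To prove that independence I would argue by contradiction, and this is the one place where saturation is used. Suppose $\sum_{i=1}^d\bar c_i\,(v_i\mymod p)=0$ in $\Fp^n$ for some $\bar c_i\in\Fp$ not all zero, and lift each $\bar c_i$ to an integer $c_i$ with $0\le c_i<p$, still not all zero. Then $\sum_i c_i v_i\equiv 0\mymod p$, so $\sum_i c_i v_i=pw$ for some $w\in\Z^n$. The vector $w=\tfrac1p\sum_i c_i v_i$ lies in $\Q\otimes M$ and also in~$\Z^n$, so saturation forces $w\in M$. Writing $w=\sum_i d_i v_i$ with $d_i\in\Z$ and comparing the two expressions via $\sum_i c_i v_i=p\sum_i d_i v_i$, the $\Q$-linear independence of the basis gives $c_i=p\,d_i$ for every~$i$. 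Since $0\le c_i<p$, this forces every $c_i=0$, contradicting the choice of a nontrivial relation. Hence $\dim M_p=d$.

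I expect no serious obstacle; the only genuine content is the appeal to saturation, which is exactly what excludes the counterexample of a non-saturated module such as $p\Z\subset\Z$ (whose reduction is~$0$). As a structural cross-check I would note that saturation is equivalent to $\Z^n/M$ being torsion-free, hence free, so that $M$ is a direct summand of~$\Z^n$; a complementary basis then reduces modulo~$p$ to a basis of $\Fp^n$ extending the reductions of $v_1,\dots,v_d$, which are therefore independent. The only bookkeeping point to watch is the passage from $\Z^\infty$ to a finite~$\Z^n$ and the verification that the saturation hypothesis survives this restriction, both of which are immediate from the finite rank of~$M$.
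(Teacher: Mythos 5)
Your core argument is exactly the paper's proof: fix a basis $v_1,\dotsc,v_d$ of~$M$, observe that the reductions $v_{i,p}$ span~$M_p$, and, given a relation $\sum_ic_iv_i=0\mymod p$, note that $\frac1p\sum_ic_iv_i$ lies in $(\Q\otimes M)\cap\Z^\infty$, which saturation identifies with $M=\bigoplus_i\Z v_i$, forcing $p\mid c_i$ for all~$i$. Your contradiction framing with lifts $0\le c_i<p$ is the same computation phrased contrapositively, and your direct-summand cross-check is a standard equivalent formulation in the finite-dimensional setting.

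The one genuine misstep is your opening reduction to~$\Z^n$. You justify it by claiming that since $\Q\otimes M$ is spanned by finitely many elements of~$M$, every element of~$M$ is supported on finitely many coordinates; but in this paper $\Z^\infty$ is the full sequence space, whose relevant elements are the Fourier-coefficient sequences of integral cusp forms, and these have infinitely many nonzero entries. So even a single spanning vector need not be finitely supported, and the claim fails. Beware also that the obvious repair---project to finitely many coordinates on which $\Q\otimes M$ is injective---does not preserve saturation: $M=\Z\cdot(1,2)\subset\Z^2$ is saturated, yet its projection to the second coordinate is $2\Z\subset\Z$, which is not (and reduces to~$0$ modulo~$2$). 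Fortunately the reduction is dispensable: your divisibility argument runs verbatim in~$\Z^\infty$, which is precisely how the paper proceeds. The only thing you used $\Z^n$ for is the freeness of~$M$, and that holds regardless (the paper tacitly assumes it by speaking of a basis of~$M$): projecting to $d$ coordinates on which $\Q\otimes M$ is injective embeds $M$ into~$\Z^d$, so $M$ is free of rank~$d$, even though, as just noted, this projection cannot be used for the saturation step itself. Your cross-check via torsion-freeness of the quotient likewise does not transfer naively to $\Z^\infty/M$, so it should be dropped or confined to the finite-rank ambient case.
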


\begin{proof}
Consider a basis $\lset v_1,\dotsc,v_d\rset$ of~$M$.
The corresponding set of reductions,
$\lset v_{1,p},\dotsc,v_{d,p}\rset$, spans~$M_p$.
Consider any linear relation over~$\Fp$ among the reductions,
$\sum_ic_{i,p}v_{i,p}=0$.  The relation gives a congruence
$\sum_ic_iv_i=0\mymod p$ in~$\Z^\infty$, and so the vector
$\sum_i(c_i/p)v_i$ lies in $(\Q\otimes M)\cap\Z^\infty$, which by
hypothesis is~$M$, which is $\bigoplus_i\Z v_i$.  So each $c_i$ lies
in~$p\Z$, and the linear combination over~$\Fp$ is trivial.
Thus $\lset v_{1,p},\dotsc,v_{d,p}\rset$ is a basis of~$M_p$.
\end{proof}

Now, recall that the complex vector space~$\anyS$ has an integral basis.
Consequently, the projection map from $\anyS$ to~$\anyS[d]$ is defined
over~$\Z$, and hence so are its kernel and image.  That is,
$\anyS(d+1)$ and $\anyS[d]$ have integral bases.
Consequently the lattices $M(d+1)=\anyS(d+1)\cap\Z^\infty$ and
$M[d]=\anyS[d]\cap\Z^\infty$ have respective ranks
$\dim_\C\anyS(d+1)$ and $\dim_\C\anyS[d]$, and they are saturated.
By the lemma, the dimensions of their reductions $M(d+1)_p$ and
$M[d]_p$ as vector spaces over~$\Fp$ match their ranks.
Because $M(d+1)_p$ lies in~$\anyS_p(d+1)$, this gives the first desired
relation, $\dim_\C\anyS(d)\le\dim_{\Fp}\anyS_p(d)$, with $d+1$ in place
of~$d$, i.e., for all~$d\ge2$.
And because $M[d]_p=\anyS[d]_p$, we have the second desired relation
as well, $\dim_\C\anyS[d]=\dim_{\Fp}\anyS[d]_p$.


We quickly review Atkin--Lehner involutions.
Let~$N$ be a positive integer, and let~$c$ be a positive divisor
of~$N$ such that $\gcd(c,N/c)=1$.  For any four integers $\alpha$,
$\beta$, $\gamma$, $\delta$ such that $\alpha\delta c-\beta\gamma N/c=1$, 
define a $c$-Atkin--Lehner matrix
$$
\alpha_c=\frac1{\sqrt c}\cmat{\alpha c}\beta{\gamma N}{\delta c}.
$$
Especially, for $c=1$ we may take $\alpha,\delta=1$ and
$\beta,\gamma=0$ to get the identity matrix, and for $c=N$ we may take
$\alpha,\delta=0$ and $\beta,\gamma=\mp1$ to get the usual Fricke
involution matrix $\frac1{\sqrt N}\smallmat0{-1}N{\phantom{-}0}$.
Let $\Gamma_0(N)$ denote the level~$N$ Hecke subgroup of~$\SLtwoZ$.
By quick calculations, the inverse of any~$\alpha_c$ is
another~$\tilde\alpha_c$, any product $\tilde\alpha_c\alpha_c$ lies in
$\Gamma_0(N)$ and so the set of all~$\tilde\alpha_c$ lies in the
coset $\Gamma_0(N)\alpha_c$, and this coset also lies in the set of
all~$\tilde\alpha_c$, making them equal.  Consequently, $\alpha_c$
squares into~$\Gamma_0(N)$ and normalizes~$\Gamma_0(N)$.
Now let $\mu_c=\alpha_c^*\boxplus\alpha_c$, where the asterisk and the
box-plus are the matrix transpose-inverse and direct sum operators.
The inverse of any~$\mu_c$ is another $\tilde\mu_c$, and any product
$\tilde\mu_c\mu_c$ lies in~$\KN$, so that the set of all~$\tilde\mu_c$
lies in the coset $\KN\mu_c$ and they all give the same action on
paramodular forms, but now the containment is proper.
Again $\mu_c$ squares into~$\KN$, and a blockwise check shows that
$\mu_c$ normalizes~$\KN$.
If $c$ divides~$N$ and $\gcd(c,NM/c)=1$ then any level~$NM$
matrix~$\alpha_c$ is also a level~$N$ matrix~$\alpha_c$, and so the
same is true for the corresponding~$\mu_c$.

The previous paragraph shows that for any prime divisor~$p$ of~$N$
such that $\gcd(p,NM/p)=1$, any $p$-Atkin--Lehner eigenform~$f$ in
$\SwtKlevel k{NM}$ traces down to a $p$-Atkin--Lehner eigenform in
$\SwtKlevel kN$.  Indeed, if $\KNM\KN=\bigsqcup_i\KNM h_i$ then the
traced down image of~$f$ is $\TD f=\sum_if\wtk{h_i}$, and
taking a $p$-Atkin--Lehner matrix~$\mu_p$ at level~$N$ that is also a
$p$-Atkin--Lehner matrix at level~$NM$ gives, because
$f\wtk{\mu_p}=\epsilon_pf$ and $\KNM\KN=\bigsqcup_i\KNM\mu_p\inv h_i\mu_p$,
$$
(\TD f)\wtk{\mu_p}
=\sum_if\wtk{\mu_p\mu_p\inv h_i\mu_p}
=\epsilon_p\TD f.
$$
In this article we take $N$ an odd prime and~$M=2$, and trace down a
level~$2N$ Atkin--Lehner eigenform having $2$-eigenvalue~$-1$ and
$N$-eigenvalue~$1$, obtaining a level~$N$ Fricke plus form.
This is our route to constructing a nonlift in a space that has no
nonlift Borcherds products.

\section{Jacobi form constructions for Borcherds
  products\label{sectionwt0Jf}}

Borcherds products are a source of nonlift paramodular forms,
although some of them are lifts.
The theory of Borcherds products for paramodular forms is
given by V.~Gritsenko and V.~Nikulin in~\cite{gn98}.
A Borcherds product takes the form $f=\BL(\psi)$ where
$\psi\in\JzeroNwh$ is a weight~$0$ weakly holomorphic Jacobi form.
Theorem~3.3 of \cite{gpy15}, which in turn is quoted from
\cite{gn98,grit12} and relies on the work of R.~Borcherds,
gives sufficient conditions for a Borcherds product to be a
paramodular Fricke eigenform; see also section~7 of \cite{MR3713095}.

One source of weight~$0$ weakly holomorphic Jacobi forms is the containment
$\Jcusp{12r}N/\Delta^r\subset\JzeroNwh$ for~$r\in\Zpos$, with $\Delta$
the discriminant function from elliptic modular forms, a weight~$12$
cusp form.  In particular, Jacobi forms $\psi\in\Jcusp{12}N/\Delta$
have nonlift Borcherds products in~$\StwoKN$ for all outstanding
levels except for $N=461,587^+$.
Jacobi forms created by this method tend to be long linear
combinations of a basis of $\Jcusp{12}N/\Delta$, and their
coefficients tend to be rational numbers with large numerators and
denominators.

A second source of weight~$0$ weakly holomorphic Jacobi forms~$\psi$
is a method that we call the {\em in\-fla\-tion method\/}.  It builds $\psi$
from a combination of two sums, one that involves raising operators
and a second that involves inflations (to be explained below),
$$
\psi=\sum_{i=1}^m\alpha_i(\phi_{1,i}|V_2)/\phi_{1,i}
+\sum_{j=1}^n\beta_j\Theta_j/\phi_{2,j}.
$$
Here the $\phi_{1,i}$ and the $\phi_{2,j}$ and the $\Theta_j$ are
basic theta blocks, by which we mean theta blocks that are weakly
holomorphic Jacobi forms of integral weight and level, and $V_2$ is
the index-raising operator of \cite{ez85}, page~$41$.  Each theta
block~$\phi_{1,i}$ lies in $\JkNwh$, has $q$-vanishing
order~$\nu_{1,i}$, and has baby theta block $b_{1,i}(\zeta)$ such that
$b_{1,i}(\zeta)\mid b_{1,i}(\zeta^2)$ \cite{psyallbp};
each theta block~$\phi_{2,j}$ lies in $\Jwh k{r_jN}$ for some~$r_j$
and has $q$-vanishing order~$\nu_{2,j}$ and baby theta block
$b_{2,j}(\zeta)$, and each theta block $\Theta_j$ lies in $\Jwh
k{(r_j+1)N}$ and has $q$-vanishing order $\tilde\nu_j$ and baby theta
block $\tilde b_j(\zeta)$, and $b_{2,j}(\zeta)\mid\tilde b_j(\zeta)$.
These conditions make each term in the previous display an element of
$\JzeroNwh$ \cite{gpy15}.
We call $\Theta_j$ an {\em inflation} of~$\phi_{2,j}$ \cite{gpy16}.
Some special cases of the inflation method are as follows.
\begin{itemize}
\item{\em Case~1.\/} $n=0$, so that
  $\psi=\sum_{i=1}^m\alpha_i(\phi_i|V_2)/\phi_i$, and furthermore
  all~$\nu_i$ are~$1$.
\item{\em Case~2.\/} $(m,n)=(1,1)$ with one $\phi$,
  such that $\nu(\phi)\in\{1,2\}$, and
  $\psi=(-1)^{\nu(\phi)}(\phi|V_2)/\phi+\beta\Theta/\phi$.  Here
  $\beta$ is usually~$\pm1$.  The first term of~$\psi$ determines $r=1$
  in the second.
\item{\em Case~3.\/} $(m,n)=(0,1)$, so that
  $\psi=\Theta/\phi$.  Usually $(\nu,\tilde\nu)=(1,2)$
  or $(\nu,\tilde\nu)=(2,2)$.
\end{itemize}
For each of the outstanding levels except for $N=461,587^+$, a Jacobi
form~$\psi$ that gives rise to a nonlift Borcherds product can be
obtained by various cases, rather than taking $\psi\in\Jcusp{12}N/\Delta$.
A nonlift Borcherds product of level $277$ can be constructed by the
Case~1 method with $m=3$, and also it can be constructed by
the Case~2 method with $(\nu,\tilde\nu)=(1,2)$ and~$\beta=-1$;
further, a different nonlift Borcherds product of level~$277$, this
one having its leading Jacobi coefficient in $\Jcusp2{554}$ rather than
$\Jcusp2{277}$, can be constructed by the Case~3 method, with
$(r,\nu,\tilde\nu)=(2,2,2)$.
For level $349$, the Case~1 method works with $m=22$, and the
resulting nonlift Borcherds product has a leading theta block with
denominator.  
For level $353$, the Case~1 method works with $m=3$, and Case~2 with
$(\nu,\tilde\nu)=(1,2)$ and $\beta=-1$ constructs the same nonlift
Borcherds product.
For level $389$, Case~2 with $(\nu,\tilde\nu)=(1,2)$ and $\beta=1$
works.
For level $523$, Case~1 works with $m=52$, and the resulting nonlift
Borcherds product has a leading theta block with denominator.  
For level $587^-$, Case~2 with $(\nu,\tilde\nu)=(2,2)$ and $\beta=-1$
works \cite{gpy16}.

We mention briefly that in \cite{MR3713095}, the authors of this
article studied $\StwoKN$ for squarefree composite levels~$N<300$,
proving by algorithm that $\StwoKN=\GritJtwoNcusp$ for all such~$N$
other than the exceptional cases $N=249,295$, where there is at most
one nonlift dimension.  These are the two odd squarefree composite
values $N<300$ for which relevant abelian surfaces exist, and at these
two levels the one known isogeny class contains Jacobians of
hyperelliptic curves.
For these two levels, we constructed the putative nonlift eigenform
by the Case~3 method, with $(r,\nu,\tilde\nu)=(2,2,2)$, as
was done for~$N=277$.


\section{Levels $461$ and $587$\label{sectionwts461587}}

For levels $N=461,587$, the methods of \cite{psyallbp} show that
there is no nonlift Borcherds product in $\StwoKN^+$.  To construct
nonlifts in these two cases, we begin by constructing Borcherds
products in $\StwoKlevel{2N}^-$.  Again by the methods of \cite{psyallbp},
there is one Borcherds product in $\StwoKlevel{922}^-$ and three in
$\StwoKlevel{1174}^-$.  These Borcherds products have integral Fourier
coefficients, some of which we can compute.
Polarizing the Borcherds products creates Atkin--Lehner eigenforms
$f_{2N}^{-,+}$ in $\StwoKtwoN^{-,+}$ (\ie, their $\mu_2,\mu_N$-eigenvalues
are $-1,1$) again having integral Fourier coefficients, and the
eigenforms trace down to $\StwoKN^+$, the tracing down again preserving
the integrality of the Fourier coefficients; Atkin--Lehner involutions~$\mu_c$
and tracing down are described in \cite{MR3713095}, and they were
briefly touched on further in section~\ref{sectionIntr} here.
However, we don't have enough Fourier coefficients of our eigenforms to
trace them down, so we need to produce more coefficients first.  We do
so in a finite characteristic~$p$.
Level~$2N$ Jacobi Restriction with $p=12347$, with $d=5$, and with
$\detmax=2305$ for level~$922$ or $\detmax=3522$ for level~$1174$,
gives a one-dimensional superspace $\JRMJ_{5,p}^{-,+}$ of
$\StwoKlevel{2N}^{-,+}[5]_p$ in each case.  Thus any one nonzero
integral Fourier coefficient modulo~$p$ of some $f^{-,+}_{2N}$
shows that $\Fp f^{-,+}_{2N}[5]_p=\JRMJ_{5,p}^{-,+}$, and this determines
the Fourier coefficients of~$f^{-,+}_{2N}[5]_p$ in~$\Fp$ at the indices where
we have Fourier coefficients of the $\JRMJ_{5,p}^{-,+}$ basis element.
We refer to this process as {\em prolonging\/}~$(f^{-,+}_{2N})_p$.
Because $f^{-,+}_{2N}$ is an Atkin--Lehner eigenform, we can obtain
yet more of its Fourier coefficients modulo~$p$ by using the relations
$\fc{\alpha t\alpha'}{f^{-,+}_{2N}}=\epsilon\fc t{f^{-,+}_{2N}}$ where
$\alpha\in\{\alpha_2,\alpha_N\}$ is a $2\times2$ Atkin--Lehner
involution matrix and~$\alpha'$ is its transpose, and $\epsilon$ is
the corresponding eigenvalue.  We refer to this process as
{\em infilling\/}~$(f^{-,+}_{2N})_p$.  Prolonging and infilling our
initial fragment of~$(f^{-,+}_{2N})_p$ can give us enough information
to demonstrate that the desired nonlift exists at level~$N$.  Let
``$\operatorname{TD}$'' denote the trace down operator in
characteristic~$0$ and in characteristic~$p$.
For $N=461$, the available Fourier coefficients of $(f_{922}^{-,+})_p$
from prolonging and infilling lead to $255$ coefficients of its traced
down image $\TD((f_{922}^{-,+})_p)$.  These are more than enough to
show that the latter is linearly independent of the Gritsenko lifts
modulo~$p$.  Because $\TD((f_{922}^{-,+})_p)=(\TD f^{-,+}_{922})_p$,
it follows that $\TD f^{-,+}_{922}$ is a nonlift in~$\StwoKlevel{461}^+$.
Similarly, for $N=587$ we have $271$ Fourier coefficients of the
traced down image of the~$(f^{-,+}_{1174})_p$ that arises from the
second (or third) of the three nonlift Borcherds product in
$\StwoKlevel{1174}^-$, again plenty to determine that
$\TD f^{-,+}_{1174}$ is a nonlift in~$\StwoKlevel{587}^+$.
The ideas of this paragraph generalize beyond an odd prime level~$N$
and a one-dimensional space $\JRMJ_{d,p}^{-,+}$, but here we described
them only as needed for the situation at hand.

\bibliographystyle{plain}
\bibliography{inflation}

\end{document}